\def \N{{\mathbb N}}
\def \R{{\mathbb R}}
\def \1{{\mathbb 1}}
\theoremstyle{plain}
\newtheorem{theorem}{Theorem}
\newtheorem{proposition}{Proposition}
\newtheorem{definition}{Definition}
\theoremstyle{remark}
\newtheorem{remark}{Remark}
\newtheorem{example}{Example}
\title[Asymmetric normed  Baire space. ]{Asymmetric normed  Baire space.}
\author{Mohammed Bachir}
\address{Laboratoire SAMM 4543, Universit\'e Paris 1 Panth\'eon-Sorbonne\\ Centre P.M.F. 90 rue Tolbiac\\
75634 Paris cedex 13\\
France}
\email{Mohammed.Bachir@univ-paris1.fr}
\begin{document}

\begin{abstract}
We prove that an asymmetric normed space  is never a Baire space  if the topology induced by the asymmetric norm is not equivalent to the topology of a norm. More precisely, we show that a biBanach asymmetric normed space  is a Baire space if and only if  it is isomorphic to its associated normed space.
\end{abstract}
\maketitle
\noindent {\bf 2010 Mathematics Subject Classification:} 54E52, 46S99, 46B20, 

\noindent {\bf Keyword, phrase:} Baire space, Asymmetric normed space, Quasi-metric space,  Index of symmetry.
\section{Introduction}
The question of knowing under what conditions  quasi-metric  spaces are Baire spaces, has been treated by several authors (the definition of quasi-metric space  will be reminded later). There exist some positive results in this direction which can be found for example in  \cite{Kl} and \cite{FG} (see also \cite{Cob}). However, it turns out that in the case of biBanach asymmetric normed spaces, the result is positive if and only if  the topology itself induces a topology of a norm. Indeed, the main result of this note is precisely to show  that if an asymmetric normed space is not isomorphic to its associated normed space, then it is not a Baire space (Theorem \ref{Baire}). Consequently, in asymmetric normed spaces, the conditions given in \cite[Theorem ~2.1]{FG} implicitly implies that these spaces are isomorphic to their associated normed spaces (in fact, we think that the condition of quasi-regularity in \cite{FG} implies that the index of symmetry introduced in \cite{Ba-Fl}, is strictly bigger than zero and so the asymmetric norm is equivalent to the associated norm). We do not know if the result of this paper extends to the general case of quasi-metric spaces (see Open quastion 1). However, we prove using our main result, that the semi-Lipschitz free space  over any quasi-metric space  is a Baire space if and only if the quasi-metric of the space  is equivalent to  its symmetrized metric. The concept of semi-Lipschitz free space over a quasi-metric space was introduced recently by A. Daniilidis, J. M. Sepulcre and F. Venegas in \cite{D-all}, in  the same way as the classical Lipschitz free space of Godefroy-Kalton in \cite{GK} (see also Section \ref{Index-Baire}). It follows easily from the results of this note that if the semi-Lipschitz-free  over a bicomplete quasi-metric space is a Baire space then it will be the same for the quasi-metric space. The converse of this fact remains an open question (see Open question 2, at the end of the paper). The tool used to establish our results is based on the use of the index of symmetry introduced recently in \cite{Ba-Fl}. 
\section{Notation and Definitions}
In this section, we will recall the classical notions that will be used subsequently. For literature on quasi-hemi-metric, asymmetric normed spaces, semi-Lipschitz functions  and their applications, we refer to \cite{Cob}, where the developement of Functional Analysis on these spaces is detailed. We refer also to the various studies developed in \cite{GRS, GRS1}, \cite{CobM}, \cite{AFG1, AFG2}, \cite{RS} and \cite{DJV}. Quasi-metric spaces and asymmetric norms have recently attracted a lot of interest in modern mathematics, they arise naturally when considering non-reversible Finsler manifolds \cite{CJ, DJV, PT}. For an introduction and recent study of asymmetric free spaces (or semi-Lipschitz free spaces), we refer to the recent paper \cite{D-all}.
\begin{definition} A quasi-hemi-metric space is a pair $(X, d)$, where $X \neq \emptyset$  and
$$d : X \times X \to  [0, +\infty)$$
is a function, called quasi-hemi-metric (or quasi-hemi-distance), satisfying:

$(i)$ $\forall x, y, z \in X: d(x, y) \leq  d(x, z) + d(z, y)$ (triangular inequality);

$(ii)$ $\forall x, y \in X: x = y  \iff  [d(x, y) = 0 \textnormal{ and } d(y,x)=0]$.

A quasi-metric space is a pair $(X, d)$ satisfying $(i)$ and the following condition $(ii')$: $\forall x, y \in X: x = y  \iff  d(x, y) = 0$.

\end{definition} 
We define respectively the open and closed balls  centered at $x$ with radious $r\geq 0$ as follows
$$B(x,r):=\lbrace y\in X: d(x,y)<r\rbrace.$$
$$B[x,r]:=\lbrace y\in X: d(x,y)\leq r\rbrace.$$
In a similar way, we define asymmetric norms on real linear spaces as follows.
\begin{definition} Let $X$ be a real linear space. We say that $\|\cdot|: X \to \R^+$ is an asymmetric norm on $X$
if  the following properties hold.
\begin{enumerate}[$(i)$]
    \item For every $\lambda \geq 0$ and every $x \in X$, $\|\lambda x|= \lambda \|x|$.
    \item For every $x, y \in X$, $\|x+y|\leq \|x|+\|y|$.
    \item For every $x \in X:  \|x|=\|-x|=0 \iff x=0$.
\end{enumerate}
\end{definition}
An asymmetric norm on a real linear space $(X,\|\cdot|)$ naturally induced a quasi-hemi-metric as follows : $$d(x,y)=\|y-x|, \hspace{3mm}  \forall x, y \in X.$$  
Notice that a sequence $(x_n)$ converges to $x$ if $d(x,x_n)=\|x_n-x|\to 0$ but in general we dont have that $\|x-x_n|\to 0$.

Every quasi-hemi-metric space $(X, d)$ has an associated metric space $(X,d_s)$ where $d_s(x,y):=\max \lbrace d(x,y),d(y,x) \rbrace$ for every $x,y \in X$. An asymmetric normed space $(X,\|\cdot|)$ has also an associated normed space $(X,\|\cdot\|_s)$, where $\|x\|_s:=\max(\|x|,\|-x|)$ for all $x\in X$.
\begin{definition} A quasi-hemi-metric space $(X, d)$ is said to be bicomplete if its associated metric space is complete. An asymmetric normed space is said to be a biBanach space if its associated normed space is a Banach space.
\end{definition}
\vskip5mm
\paragraph{\bf The space of continuous linear functionals.} Let $(X,\|\cdot|)$ be an asymmetric normed spaces  and $(\R,\|\cdot|_\R)$ be the asymmetric normed real line equipped with the asymmetric norm $\|t|_{\R}=\max\lbrace t,0\rbrace$. A linear functional $p: (X,\|\cdot|)\to (\R,\|\cdot|_{\R})$ is called bounded if there exists $C\geq 0$ such that $$p(x)\leq C\|x|, \hspace{2mm} \forall x\in X.$$
In this case, we denote $\|p|_{\flat}:=\sup_{\|x|\leq 1}p(x)$. We denote $X^{\flat}$ the convex cone of all bounded functionals  from $(X,\|\cdot|_X)$ into $ (\R,\|\cdot|_{\R})$.
It is known (see \cite[Proposition 2.1.2]{Cob}) that a linear functional $p$ is bounded if and only if it is continuous, which in turn is equivalent to being continuous at $0$. It is easy to see that $p\in X^{\flat}$, if and only if $p: (X,\|\cdot|)\to (\R,|\cdot|)$ is upper semicontinuous.   The constant $\|p|_{\flat}$ can be calculated also by the formula (see \cite[Proposition 2.1.3]{Cob})
$$\|p|_{\flat}=\sup_{\|x|= 1} \|p(x)|_{\R}.$$
 The topological dual of the associated normed space $X_s:=(X,\|\cdot\|_s)$ of $X$  is denoted $X^*$ and is equipped with the usual dual norm denoted $\|p\|_{*}=\sup_{\|x\|_s\leq 1} \langle p, x\rangle$, for all $p\in X^*$.  Note, from \cite[Theorem 2.2.2]{Cob}, that the convex cone $X^{\flat}$ is not trivial, that is, $X^{\flat}\neq \lbrace 0 \rbrace$ whenever $X\neq\lbrace 0 \rbrace$. We always have that
$$X^{\flat} \subset X^* \textnormal{ and }  \|p\|_{*}\leq \|p|_{\flat}, \textnormal{ for all } p\in X^{\flat}.$$ 
Thus, $X^{\flat}$ is a convex cone included in $(X^*,\|\cdot\|_{*})$ but is not a vector space in general. We say that $(X,\|\cdot|_X)$ and $(Y,\|\cdot|_Y)$ are isomorphic and we use the notation $(X,\|\cdot|_X) \simeq(Y,\|\cdot|_Y)$, if there exists a bijective linear operator $T: (X,\|\cdot|_X)\to (Y,\|\cdot|_Y)$ such that $T$ and $T^{-1}$ are bounded. 

\paragraph{\bf The space of semi-Lipschitz functions.} Let  $(X, d)$ be a quasi-hemi-metric space equipped with a distinguished point $x_0$ (called a base point of $X$) and $(\R,\|\cdot|_\R)$ be the asymmetric normed real line equipped with the asymetric norm $\|t|_{\R}=\max\lbrace t,0\rbrace$. A function $f : (X, d)\to (\R,\|\cdot|_{\R})$ is said to be semi-Lipschitz if, there exists a positive real number  $C_f\geq 0$ such that 
$$\|f(x)-f(y)|_{\R}=\max \lbrace f(x)-f(y), 0\rbrace \leq C_f d(x,y), \hspace{3mm} \forall x, y\in X.$$
In general $-f$ is not semi-Lipschitz function, when $f$ is.  The convex cone of all semi-Lipschitz functions that vanish at $x_0$ will be denoted by $\textnormal{SLip}_0(X)$. 
We denote $$\|f|_L:=\sup_{d(x,y)>0 } \frac{\|f(x)-f(y)|_{\R}}{d(x,y)}; \hspace{3mm} \forall f\in \textnormal{SLip}_0(X),$$

\section{The main result} \label{Index-Baire}

 We define the index of symmetry of a  quasi-hemi-metric space $(X, d)$ in the same way as that introduced in \cite{Ba-Fl}  for asymmetric normed spaces:  
$$c(X):=\inf_{d(y,x)>0} \frac{d(x,y)}{d(y,x)} \in [0,1].$$
The conjugate of $c(X)$ is defined by $\overline{c}(X):=\sup_{d(y,x)>0} \frac{d(x,y)}{d(y,x)}$.
In the case of asymmetric normed space, using the quasi-metric $d(x,y)=\|y-x|$, we see that $c(X)=\inf_{\|x|=1} \|-x|$.

This index measures the degree of symmetry of the quasi-hemi-metric $d$, it was firstly introduced and studied in the assymetric normed space in \cite{Ba-Fl}, where a topological classification of asymmetric normed spaces has been given according to this index. Similarly to the framework of asymmetric normes spaces, we have the following proposition.

\begin{proposition} \label{index} Let $(X, d)$ be a quasi-hemi-metric space. Then, the following assertions hold.

$(i)$ $c(X) \in [0,1]$.

$(ii)$ If $c(X)>0$, then $\overline{c}(X)=\frac{1}{c(X)}\in[1,+\infty[$. If  moreover, $(X, d)$ is a quasi-metric space (that is $d(x,y)=0$ iff $x=y$) then, this formula extends to the case when $c(X)=0$, with $\overline{c}(X)=+\infty$. 

$(iii)$ $d$ is a metric if and only if $c(X)=1$.

\end{proposition}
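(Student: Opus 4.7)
The plan is to treat the three assertions in order, the common theme being the symmetry of the condition ``$d(y,x)>0$'' under exchanging $x$ and $y$, together with the observation that the ratio $d(x,y)/d(y,x)$ inverts under this exchange.

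For $(i)$, non-negativity $c(X)\geq 0$ is immediate from $d\geq 0$. To show $c(X)\leq 1$, I would pick any pair $(x,y)$ with $d(y,x)>0$; if also $d(x,y)=0$, the ratio is already $0$ and we are done, while otherwise the identity
\[
\frac{d(x,y)}{d(y,x)}\cdot\frac{d(y,x)}{d(x,y)}=1
\]
forces at least one of the two factors to be $\leq 1$, yielding $c(X)\leq 1$.

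For $(ii)$, I would first assume $c(X)>0$. Then for every pair $(x,y)$ with $d(y,x)>0$, the inequality $d(x,y)/d(y,x)\geq c(X)>0$ forces $d(x,y)>0$ as well, so the set of admissible pairs for the infimum is stable under swapping, and the ratio inverts under that swap; therefore the supremum over this set is the reciprocal of the infimum, giving $\overline{c}(X)=1/c(X)$. For the extension to $c(X)=0$ under the quasi-metric hypothesis, the crucial remark is that the stronger separation axiom $d(a,b)=0\Leftrightarrow a=b$ automatically forces $d(x,y)>0\Leftrightarrow d(y,x)>0$ whenever $x\neq y$, so the swap argument still applies; picking a sequence $(x_n,y_n)$ with $d(x_n,y_n)/d(y_n,x_n)\to 0$ and reversing the roles, one gets $d(y_n,x_n)/d(x_n,y_n)\to+\infty$, hence $\overline{c}(X)=+\infty$.

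For $(iii)$, the direction ``$d$ metric $\Rightarrow c(X)=1$'' is trivial, since all admissible ratios then equal $1$. Conversely, $c(X)=1$ gives $d(x,y)\geq d(y,x)$ whenever $d(y,x)>0$; by swapping the names, also $d(y,x)\geq d(x,y)$ whenever $d(x,y)>0$. Consequently $d(x,y)=d(y,x)$ whenever either side is nonzero, and the equality is trivial when both vanish, so $d$ is symmetric and the quasi-hemi-metric separation axiom becomes the usual one for a metric.

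The only genuinely subtle point, and the reason why the quasi-metric hypothesis is invoked in $(ii)$ when $c(X)=0$, is the possibility in a general quasi-hemi-metric space of having $d(x,y)=0$ while $d(y,x)>0$: a minimizing sequence for $c(X)$ could then consist of pairs whose reversals are not admissible for the supremum defining $\overline{c}(X)$, blocking the swap trick. Once this asymmetry between the two separation axioms is pinned down, the rest is elementary bookkeeping with reciprocals.
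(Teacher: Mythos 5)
Your proof is correct and follows essentially the same route as the paper's: the same swap-and-reciprocal argument for $(i)$ and $(ii)$, the same use of the quasi-metric separation axiom to ensure the reversed pairs are admissible when $c(X)=0$, and the same two-sided inequality yielding symmetry in $(iii)$. Your closing remark correctly isolates the reason the formula $\overline{c}(X)=1/c(X)$ can fail at $c(X)=0$ for a mere quasi-hemi-metric, which the paper leaves implicit.
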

\begin{proof} $(i)$ It is clear that $c(X)\geq 0$. Suppose by contradiction that $c(X)>1$. Then, for every $x, y\in X$ such that $d(y,x)>0$ we have that $d(x,y)>d(y,x)>0$. Thus, since $d(x,y)>0$, we also have that $d(y,x)>d(x,y)$ which is a contradiction.  Thus,  $c(X) \in [0,1]$. To see $(ii)$, we observe by definition that $c(X)d(y,x)\leq d(x,y)$ for all $x, y\in X$ (even if $c(X)=0$). If, $c(X)>0$, then we get that $\overline{c}(X)\leq \frac{1}{c(X)}<+\infty$. On the other hand, we have that $d(x,y)\leq \overline{c}(X)d(y,x)$ for all $x,y \in X$. This implies that $\frac{1}{\overline{c}(X)}\leq c(X)$. Hence, we have that $\overline{c}(X)=\frac{1}{c(X)}\in[1,+\infty[$. If $c(X)=0$, then, there exists a pair of sequences $(a_n,b_n)\in X\times X$ such that $d(b_n,a_n)>0$ and $\frac{d(a_n,b_n)}{d(b_n,a_n)}\to 0$. Since $(X, d)$ is a quasi-metric space and $d(b_n,a_n)>0$, then $d(a_n,b_n)>0$. Thus, $\frac{d(b_n,a_n)}{d(a_n,b_n)}\to +\infty$ and so $\overline{c}(X)=+\infty=\frac{1}{c(X)}$. For part $(iii)$, suppose that $c(X)=1$. In this case, we have that $d(x,y)\geq d(y,x)$ for all $x,y\in X$, which implies that $d(x,y)=d(y,x)$ for all $x,y\in X$. Hence $d$ is a metric. The converse is trivial.
\end{proof}
Recently, we proved in \cite{Ba-Fl}, the following result that we will use. The difficult part in this theorem is $(iii) \Longrightarrow (i)$. 

\begin{theorem} \label{Ba-Fl} \textnormal{(\cite[Corollary 3]{Ba-Fl})} Let $(X, \|\cdot|)$ be an asymmetric normed space. The following assertions are equivalent. 

$(i)$ $c(X)>0$.

$(ii)$ $ (X, \|\cdot|) $ is isomorphic to its associated normed space.

$(iii)$ $X^{\flat}$ is a vector space.

\end{theorem}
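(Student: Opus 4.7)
The plan is to establish $(i) \Rightarrow (ii) \Rightarrow (iii)$ by direct arguments and the hard implication $(iii) \Rightarrow (i)$ by contraposition.

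For $(i) \Rightarrow (ii)$, I would invoke Proposition \ref{index}$(ii)$: if $c(X) > 0$ then $\overline{c}(X) = 1/c(X) < \infty$, which rewrites as $\|-x| \leq \overline{c}(X)\|x|$ for every $x \in X$. Combined with the trivial $\|x| \leq \|x\|_s$, this yields $\|x| \leq \|x\|_s \leq \overline{c}(X)\|x|$, so the identity is an isomorphism from $(X,\|\cdot|)$ onto $(X,\|\cdot\|_s)$. For $(ii) \Rightarrow (iii)$, let $T : (X,\|\cdot|) \to (X,\|\cdot\|_s)$ be an isomorphism. A direct check shows that precomposition $p \mapsto p \circ T$ is a linear bijection between $X^*$ and $X^{\flat}$: boundedness of $T$ sends $X^*$ into $X^{\flat}$ since $p(Tx) \leq \|p\|_*\|Tx\|_s \leq K\|p\|_*\|x|$, and boundedness of $T^{-1}$ sends $X^{\flat}$ into $X^*$ by the analogous estimate $|q(T^{-1}y)| \leq C\|T^{-1}y| \leq CK'\|y\|_s$ applied to both $y$ and $-y$. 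Hence $X^{\flat}$ inherits the vector-space structure of $X^*$.

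The hard implication is $(iii) \Rightarrow (i)$, which I would prove by contraposition: assume $c(X) = 0$ and construct $p \in X^{\flat}$ with $-p \notin X^{\flat}$. Pick a sequence $(y_n) \subset X$ with $\|y_n| = 1$ and $\epsilon_n := \|-y_n| \to 0$, and for each $n$ use the asymmetric Hahn--Banach theorem (see \cite{Cob}) to obtain $q_n \in X^{\flat}$ with $\|q_n|_{\flat} = 1$ and $q_n(y_n) = 1$. The degenerate case $\epsilon_n = 0$ is immediate: take $p = q_n$; then for every $t \geq 0$ the vector $-ty_n$ has $\|-ty_n| = 0$ while $(-q_n)(-ty_n) = t$, so $\|-q_n|_{\flat} = +\infty$. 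Otherwise all $\epsilon_n > 0$, and I would thin the sequence so that $\epsilon_n \leq 4^{-n}$ and set
$$p := \sum_{n=1}^{\infty} \sqrt{\epsilon_n}\, q_n.$$
Since $\sum \sqrt{\epsilon_n} \leq \sum 2^{-n} < \infty$ and $q_n(x) \leq \|x|$ for every $x$, the series converges pointwise and $p(x) \leq \bigl(\sum_n \sqrt{\epsilon_n}\bigr)\|x|$, so $p \in X^{\flat}$.

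For the lower bound that yields the contradiction, the inequality $q_k(-y_n) \leq \|-y_n| = \epsilon_n$ gives $q_k(y_n) \geq -\epsilon_n$ for every $k$, whence
$$p(y_n) \geq \sqrt{\epsilon_n}\, q_n(y_n) - \epsilon_n \sum_{k \neq n} \sqrt{\epsilon_k} \geq \sqrt{\epsilon_n} - \epsilon_n.$$
As $(-p)(-y_n) = p(y_n)$ and $\|-y_n| = \epsilon_n$, if $-p$ lay in $X^{\flat}$ with $\|-p|_{\flat} = M$ we would obtain $\sqrt{\epsilon_n} - \epsilon_n \leq M\epsilon_n$, i.e., $1/\sqrt{\epsilon_n} \leq M + 1$, contradicting $\epsilon_n \to 0$. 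The main obstacle is precisely the choice of weights $\sqrt{\epsilon_n}$: they must be summable (to keep $p$ bounded) yet decay slowly enough relative to $\epsilon_n$ to force $\|-p|_{\flat} = +\infty$, and the preliminary thinning $\epsilon_n \leq 4^{-n}$ is what makes both constraints compatible.
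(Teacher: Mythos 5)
Your proof is correct, and it is worth noting that the paper does not actually prove this theorem: it is imported verbatim from \cite[Corollary 3]{Ba-Fl}, with only the remark that $(iii)\Rightarrow(i)$ is the difficult implication. So your proposal supplies an argument the paper omits. The two easy implications are handled in the standard way: $(i)\Rightarrow(ii)$ is exactly the two-sided estimate $c(X)\|x\|_s\leq\|x|\leq\|x\|_s$ that the paper itself invokes inside the proof of Theorem \ref{Baire}, and $(ii)\Rightarrow(iii)$ via $p\mapsto p\circ T$ is routine and correctly checked on both $y$ and $-y$. For the hard direction, your weighted-series construction works: the separate treatment of the degenerate case $\|-y_n|=0$, the thinning $\epsilon_n\leq 4^{-n}$, the choice of weights $\sqrt{\epsilon_n}$ (summable, yet with $\sqrt{\epsilon_n}/\epsilon_n\to\infty$), and the uniform lower bound $q_k(y_n)\geq-\epsilon_n$ fit together to produce $p\in X^{\flat}$ with $(-p)(-y_n)/\|-y_n|\geq \epsilon_n^{-1/2}-1\to+\infty$, so $-p\notin X^{\flat}$. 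One small point of hygiene: the one-sided bound $q_n(x)\leq\|x|$ does not by itself give convergence of $\sum_n\sqrt{\epsilon_n}\,q_n(x)$, since the terms could a priori be very negative; you should invoke the two-sided bound $|q_n(x)|\leq\|x\|_s$, which follows from $\|q_n\|_{*}\leq\|q_n|_{\flat}=1$ as recorded in the paper, to get absolute convergence and the linearity of the limit. With that repaired, your argument gives a self-contained, elementary proof of the equivalence, whereas the paper simply defers to the external reference \cite{Ba-Fl}; the benefit of your route is that the key quantitative mechanism (summable weights decaying strictly slower than the asymmetry defects $\epsilon_n$) is made explicit.
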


\vskip5mm

Now, we give our main result.

\begin{theorem} \label{Baire}  Let $(X,\|\cdot|)$ be an asymmetric normed  space. Suppose that $c(X)=0$. Then, $(X,\|\cdot|)$ is not a Baire space. If moreover, we assume that $(X,\|\cdot|)$ is  biBanach, then the converse is also. Consequently, a biBanach asymmetric normed space is a Baire space if and only if it is isomorphic to its associated normed space. 
\end{theorem}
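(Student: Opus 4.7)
The plan is to prove the forward direction ($c(X)=0 \Longrightarrow X$ is not Baire) by exhibiting $X$ as a countable union of closed sets with empty interior, and then deduce the biBanach equivalence from this together with Theorem~\ref{Ba-Fl}.

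First, I would introduce the sets $F_n := \{x \in X : \|-x| \leq n\}$ for $n \in \N$. The map $x \mapsto \|-x|$ is lower semicontinuous for the asymmetric topology: if $\|x_k - x| \to 0$, then from $-x = -x_k + (x_k - x)$ and subadditivity one gets $\|-x| \leq \|-x_k| + \|x_k - x|$, hence $\|-x| \leq \liminf_k \|-x_k|$. Therefore each $F_n$ is closed, and clearly $X = \bigcup_n F_n$ since $\|-x|$ is finite for every $x$. The core step is to show each $F_n$ has empty interior. Assuming $B(x_0, r) \subset F_n$, I would exploit $c(X) = 0$ to produce vectors $v_k$ with $\|v_k| \to 0$ and $\|-v_k| = 1$: concretely, take $v_k := -u_k$ for $u_k$ with $\|u_k|=1$ and $\|-u_k| \to 0$. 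Rescaling, the points $y_k := x_0 + s_k v_k$ with $s_k \to +\infty$ chosen so that $s_k \|v_k| < r$ lie in $B(x_0, r)$, hence in $F_n$; but the identity $-s_k v_k = x_0 - y_k$ combined with subadditivity forces $s_k = s_k \|-v_k| = \|-s_k v_k| \leq \|x_0| + \|-y_k| \leq \|x_0| + n$, a contradiction. Hence $X$ is of the first category in itself, so not Baire.

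For the biBanach converse, the contrapositive of what has just been shown, together with Theorem~\ref{Ba-Fl}, delivers the result: if $(X, \|\cdot|)$ is biBanach and Baire, then $c(X) > 0$, whence $(X, \|\cdot|) \simeq (X, \|\cdot\|_s)$. Conversely, an isomorphism of asymmetric normed spaces is a homeomorphism of the underlying topologies, so it transports the Baire property from the Banach space $(X, \|\cdot\|_s)$ (Baire by the biBanach hypothesis) back to $(X, \|\cdot|)$, yielding the final equivalence.

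The main technical obstacle I anticipate is managing the scaling cleanly when $\|-u_k|$ (hence $\|v_k|$) vanishes, because then $s_k := r/(2\|v_k|)$ is undefined. The fix is easy: if $\|v_k| = 0$, then $y := x_0 + s v_k$ belongs to $B(x_0, r)$ for every $s > 0$ (as $\|y - x_0| = s \|v_k| = 0$), and letting $s \to +\infty$ produces the same contradiction through the triangle estimate, which only uses $\|-v_k| = 1$. All other ingredients, lower semicontinuity of $\|-\cdot|$, the first-category decomposition, and the Baire transfer under isomorphism, are routine.
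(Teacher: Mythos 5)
Your argument is correct, and the forward direction takes a genuinely different route from the paper. The paper first invokes Theorem~\ref{Ba-Fl} to produce a linear functional $p\in X^{\flat}$ with $-p\notin X^{\flat}$, and then writes $X=\bigcup_n\{x: -p(x)\leq n\}$, deriving the contradiction from the unboundedness of $-p$ on the unit ball; note that this uses the implication $c(X)=0\Rightarrow X^{\flat}$ is not a vector space, i.e.\ the contrapositive of the direction $(iii)\Rightarrow(i)$ of Theorem~\ref{Ba-Fl}, which is precisely the part the author flags as difficult. You instead decompose $X=\bigcup_n\{x:\|-x|\leq n\}$ using the gauge $x\mapsto\|-x|$ itself, verify its lower semicontinuity by the same subadditivity trick the paper uses for $-p$, and kill the interior of each level set by scaling the witnesses $u_k$ of $c(X)=\inf_{\|x|=1}\|-x|=0$; your handling of the degenerate case $\|-u_k|=0$ is needed and correct. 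This buys a completely self-contained and more elementary proof of the first-category statement, bypassing the dual cone and the hard half of Theorem~\ref{Ba-Fl} entirely (Theorem~\ref{Ba-Fl}, or rather the elementary inequality $c(X)\|x\|_s\leq\|x|\leq\|x\|_s$, is then only needed for the final reformulation in terms of the associated normed space, exactly as in the paper). What the paper's route buys in exchange is the structural link between the Baire property and the failure of $X^{\flat}$ to be a vector space, which is the theme of the rest of the article (e.g.\ Proposition~\ref{Comp-index}). Your treatment of the biBanach converse coincides with the paper's.
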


\begin{proof} Suppose that $c(X)=0$, it follows using Theorem \ref{Ba-Fl}, that $X^{\flat}$ is not a vector space. Thus, there exists a linear functional $p\in  X^{\flat}$ such that $-p\not \in X^{\flat}$. Since $p\in X^{\flat}$, then $p$ is upper semicontinuous from $(X,\|\cdot|)$ into $(\R,|\cdot|)$. For each $n\in \N$, let $F_n:=\lbrace x \in X: -p(x)\leq n \rbrace$. Then $F_n$ is a closed subset of $(X,\|\cdot|)$ (since $-p$ is lower semicontinuous) and we have that $X=\cup_{n\in \N}F_n$. Suppose by contradiction that $(X,\|\cdot|)$ is a Baire space. Then, there exists $n_0\in \N$ such that  $F_{n_0}$ has a non empty interior. Let $\varepsilon >0$ and $x_0\in X$ such that  $B_{\|\cdot|}(x_0,\varepsilon) \subset F_{n_0}$. For each $h\in B_{\|\cdot|}(0,1)$, we have that $x_0+\varepsilon h\in B_{\|\cdot|}(x_0,\varepsilon)$. Thus, $$-p(x_0+\varepsilon h)\leq n_0.$$ 
In other words, for each $h\in B_{\|\cdot|}(0,1)$ we have that $-p(h)\leq \frac{n_0+p(x_0)}{\varepsilon}$. This implies that $\|-p|_{\flat}\leq \frac{n_0+p(x_0)}{\varepsilon}<+\infty$  which contradict the fact that $-p\not \in X^{\flat}$.

Now, to see the second part, if $c(X)>0$, then we have that (see \cite[Proposition 3.]{Ba-Fl}):
\begin{eqnarray*} \label{eq2}
c(X)\|x\|_s\leq \|x|\leq \|x\|_s, \hspace{2mm} \forall x\in X.
\end{eqnarray*}
 Thus, as $(X, \|\cdot\|_s)$ is a Banach space, it is a Baire space and so  $(X, \|\cdot|)$ is also a Baire space, since its topology is equivalent to the norm topology by the above inequality. The equivalence between the condition $c(X)>0$ and the fact that $(X,\|\cdot|)$ is isomorphic to its associated normed space, is trivial from the above inequalities.
\end{proof}
\begin{example} \label{ex}   Let $X= l^{\infty}(\N^*)$ equipped with the asymmetric norm $\|\cdot|_{\infty}$ defined by 
$$\|x|_{\infty}=\sup_{n\in \N^*}\|x_n|_{\frac 1 n}\leq \|x\|_{\infty},$$
where for each $t\in \R$ and each $n\in \N^*$, $\|t|_{\frac 1 n}=t$  if $t\geq 0$ and $\|t|_{\frac 1 n}=-\frac{t}{n}$ if $t\leq 0$. Then, for each $n\in \N^*$, we have $\|e_n|_{\infty}=1$ and $\|-e_n|_{\infty}=\frac 1 n$, where $(e_n)$ is the canonical basis of $c_0(\N^*)$. It follows that $c(l^{\infty}(\N^*))=0$ and so by the above theorem, $( l^{\infty}(\N^*), \|\cdot|_{\infty})$ is not a Baire space.
\end{example}
\begin{remark} The above theorem extends easily to asymmetric normed cone with the same proof, replacing "biBanach" by "bicomplete" (we refer to \cite{D-all, Cob} for more informations about asymmetric normed cones and their duals).
\end{remark}
Notice that it is easy to see that for any quasi-hemi-metric space $(X,d)$, we have that $ c(X)d_s\leq d\leq d_s$ ($c(X)$ my be equal to zero). Thus,  if $c(X)>0$ and $(X,d)$ is bicomplete (equivalently $(X,d_s)$ is a complete metric space), then $(X,d)$ is a Baire space as $d$ is equivalent to the complete metric $d_s$. We dont know if the converse remains true. We have the following open question.
\vskip5mm
\noindent{\bf Open question 1.} Does the result of Theorem \ref{Baire} extends to any quasi-hemi-metric space? More precisely, if $(X,d)$ is any quasi-hemi-metric space, does $c(X)=0$ implies that $(X,d)$ is not a Baire space?
\vskip5mm
We are going to prove in Theorem \ref{A-free} bellow, that the condition $c(X)=0$ in quasi-hemi-metric space,  is however always equivalent to the fact that the semi-Lipschitz free space $\mathcal{F}_a(X)$ over $(X, d)$ (introduced recently in \cite{D-all}) is not a Baire space. We recall below the notion of semi-Lipschitz free space and we refer to \cite{D-all} for more details.
\vskip5mm
 \paragraph{\bf The semi-Lipschitz free space.}  The semi-Lipschitz free space over a quasi-hemi-metric space $X_d:=(X,d)$ was recently introduced and studied in \cite{D-all}. Its construction is analogous to the classical Lipschitz free space over a metric space introduced by Godefroy-Kalton in \cite{GK}.  Notice that the cone $\textnormal{SLip}_0 (X_d)$ in our paper corresponds to the cone $\textnormal{SLip}_0 (X_{\overline{d}})$ in the paper \cite{D-all}, where $\overline{d}$ denotes the conjugate of $d$ defined by $\overline{d}(x,y)=d(y,x)$ for all $x, y\in X$.  For every $x\in X$ we consider the corresponding evaluation mapping
\begin{eqnarray*}
\delta_x : \textnormal{SLip}_0(X_{\overline{d}})&\to& \R\\ 
                      f &\mapsto& f(x)
\end{eqnarray*}
It is shown in \cite[Proposition 3.1]{D-all} that for each $x \in X$, both the evaluation
functional $\delta_x$ and  its opposite $-\delta_x$ belong to the dual cone of $\textnormal{SLip}_0(X_{\overline{d}})$. The semi-Lipschitz free space over $(X, d)$, denoted by $\mathcal{F}_a(X)$, is defined as the (unique) bicompletion of the asymmetric normed space $(\textnormal{span} \lbrace\delta_x : x\in X \rbrace, \|\cdot|^*)$, where $\|\cdot|^*$ is the restriction of the norm of the dual cone of $\textnormal{SLip}_0(X_{\overline{d}})$ (corresponding to $\|\cdot|_{\flat}$ in our notation). It is proved in \cite[Theorem 3.5]{D-all} that  the
dual cone of the asymmetric normed space $\mathcal{F}_a(X)$ (that is $\mathcal{F}_a(X)^{\flat}$, also denoted by $\mathcal{F}_a(X)^{*}$  in \cite{D-all})  is isometrically isomorphic to $\textnormal{SLip}_0(X_{\overline{d}})$ and  that (see \cite[Proposition 3.3]{D-all})
\begin{eqnarray*}
\delta : (X,d) &\to& (\mathcal{F}_a(X), \|\cdot|^*)\\ 
                      x &\mapsto& \delta_x
\end{eqnarray*}
is an isometry onto its image.  Therefore, for any $x, y \in X$, we have
$$\overline{d}(x,y)=d(y,x)=\|\delta_x-\delta_{y}|^*.$$
\begin{proposition} \label{Comp-index} Let $(X,d)$ be  a quasi-hemi-metric space and $\mathcal{F}_a(X)$ be the semi-Lipschitz free space over $(X, d)$. Then, we have that $c(\mathcal{F}_a(X))\leq c(X)$. Moreover,  we have that 

$(i)$ $c(X)=0$ if and only if $c(\mathcal{F}_a(X))=0$.

$(ii)$ $c(X)=1$ if and only if $c(\mathcal{F}_a(X))=1$.
\end{proposition}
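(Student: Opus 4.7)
The plan is to establish two matching inequalities, $c(\mathcal{F}_a(X)) \leq c(X)$ unconditionally and $c(\mathcal{F}_a(X)) \geq c(X)$ when $c(X) > 0$. Together with the universal bound $c(\cdot) \in [0,1]$ from Proposition \ref{index}, these imply both (i) and (ii) at once, and in fact the sharper statement that $c(\mathcal{F}_a(X)) = c(X)$ whenever $c(X)>0$.

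For the upper bound, I would test the definition of $c(\mathcal{F}_a(X))$ on the special elements $\mu = \delta_y - \delta_x \in \mathrm{span}\{\delta_x : x \in X\}$. Since $\|\delta_x - \delta_y|^* = d(y,x)$, one has $\|\mu|^* = d(x,y)$ and $\|-\mu|^* = d(y,x)$, so the quotient $\|-\mu|^*/\|\mu|^*$ coincides with $d(y,x)/d(x,y)$. Taking the infimum over pairs with $d(x,y) > 0$ reproduces exactly the definition of $c(X)$, hence $c(\mathcal{F}_a(X)) \leq c(X)$. This already yields $c(X)=0 \Rightarrow c(\mathcal{F}_a(X))=0$ in (i) and $c(\mathcal{F}_a(X))=1 \Rightarrow c(X)=1$ in (ii).

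For the lower bound, assuming $c(X) > 0$, the inequality $d(x,y) \leq d(y,x)/c(X)$ is available for all $x,y$. The key step is to show that this forces the cone $\textnormal{SLip}_0(X_{\overline{d}})$ to be closed under negation with a controlled constant: if $\|f|_L \leq 1$, then estimating $\max(-f(x)+f(y),0)$ against $\overline{d}(x,y) = d(y,x)$ using the semi-Lipschitz inequality for $f$ (applied with the roles of $x,y$ swapped, so that only $\overline{d}(y,x) = d(x,y)$ appears on the right) and then plugging in $d(x,y) \leq d(y,x)/c(X)$ gives $\|-f|_L \leq 1/c(X)$. Consequently $\{g \in \textnormal{SLip}_0(X_{\overline{d}}) : \|g|_L \leq c(X)\} \subseteq \{-f : \|f|_L \leq 1\}$, because for any such $g$ the function $f = -g$ satisfies $\|f|_L \leq 1$. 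Inserting this into the duality $\mathcal{F}_a(X)^{\flat} \cong \textnormal{SLip}_0(X_{\overline{d}})$ yields, for $\mu$ in the dense span,
\[
\|-\mu|^* = \sup_{\|f|_L \leq 1}(-f)(\mu) \geq \sup_{\|g|_L \leq c(X)} g(\mu) = c(X)\|\mu|^*,
\]
and this extends by continuity of $\|\cdot|^*$ and $\|-\cdot|^*$ in the complete symmetric norm $\|\cdot\|_s^*$ to the bicompletion $\mathcal{F}_a(X)$.

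The main obstacle is the negation estimate for semi-Lipschitz functions: it is precisely where the positivity of $c(X)$ is used in an essential way, and it is what allows the level sets of the asymmetric dual norm $\|\cdot|_L$ to be compared under negation. The rest reduces to standard manipulation of suprema plus a routine passage to the bicompletion, giving the matching lower bound and hence the converse implications $c(\mathcal{F}_a(X))>0 \Rightarrow c(X)>0$ in (i) and $c(X)=1 \Rightarrow c(\mathcal{F}_a(X))=1$ in (ii).
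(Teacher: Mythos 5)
Your proof is correct, but for the nontrivial direction it takes a genuinely different route from the paper. The upper bound $c(\mathcal{F}_a(X))\leq c(X)$ via the elements $\delta_y-\delta_x$ and the isometry $\|\delta_x-\delta_y|^*=d(y,x)$ is exactly the paper's computation. For the converse in $(i)$, however, the paper argues by contradiction through Theorem \ref{Ba-Fl}: from $c(\mathcal{F}_a(X))=0$ it deduces that $\mathcal{F}_a(X)^{\flat}\cong \textnormal{SLip}_0(X_{\overline{d}})$ is not a vector space, picks $f$ semi-Lipschitz with $-f$ not semi-Lipschitz, and extracts sequences $(a_n,b_n)$ with $\frac{d(a_n,b_n)}{d(b_n,a_n)}<\frac{\|f|_L}{n}\to 0$; this invokes the difficult implication of the cited theorem. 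You instead prove the contrapositive quantitatively: the negation estimate $\|-f|_L\leq \|f|_L/c(X)$ (which is valid --- note that $c(X)>0$ forces $d$ to be a genuine quasi-metric, since a pair with $d(y,x)=0<d(x,y)$ would already give $c(X)=0$, so no degenerate pairs interfere with the semi-Lipschitz inequality) yields the inclusion of level sets, hence $\|-\mu|^*\geq c(X)\|\mu|^*$ on the dense span and, by $1$-Lipschitz continuity of both asymmetric norms for $\|\cdot\|_s^*$, on all of $\mathcal{F}_a(X)$. This buys two things the paper's argument does not: it avoids Theorem \ref{Ba-Fl} entirely, and it gives the matching lower bound $c(\mathcal{F}_a(X))\geq c(X)$ when $c(X)>0$, so combined with the upper bound you actually obtain $c(\mathcal{F}_a(X))=c(X)$ in all cases --- precisely the equality the paper states it does not know how to prove in the remark following the proposition. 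It also makes $(ii)$ an immediate corollary, whereas the paper handles $c(X)=1$ separately by identifying $\mathcal{F}_a(X)$ with the classical Lipschitz-free Banach space.
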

\begin{proof} By the definition of the index of symmetry, we have that 
\begin{eqnarray*}
c(\mathcal{F}_a(X)) &:=&\inf_{Q\in \mathcal{F}_a(X); \|Q|^*>0}\frac{\|-Q|^*}{\|Q|^*}\\
                                 &\leq& \inf_{x, y\in X; \|\delta_x -\delta_{y}|^*>0}\frac{\|\delta_{y} -\delta_{x}|^*}{\|\delta_x -\delta_{y}|^*}\\
                                 &=& \inf_{d(y,x)>0}\frac{d(x,y)}{d(y,x)}\\
                                 &=& c(X).
\end{eqnarray*}
Clearly $c(X)=0$ implies that $c(\mathcal{F}_a(X))=0$. To see the converse,  suppose that $c(\mathcal{F}_a(X))=0$. By applying  Theorem \ref{Ba-Fl} to the asymmetric normed space $\mathcal{F}_a(X)$, we get that $\mathcal{F}_a(X)^{\flat}$ is not a vector space. Thus, there exists $f\in \mathcal{F}_a(X)^{\flat}=\textnormal{SLip}_0(X_{\overline{d}})$ such that $-f\not \in \mathcal{F}_a(X)^{\flat}=\textnormal{SLip}_0(X_{\overline{d}})$. Thus, there exists a pair of sequences $((a_n),(b_n))\subset X\times X$ such that for all $n\in \N\setminus \lbrace 0 \rbrace$, we have
\begin{eqnarray*}
nd(a_n,b_n) < f(a_n)-f(b_n) \leq \|f|_Ld(b_n,a_n).
\end{eqnarray*}
This implies that:  $\forall n\in \N\setminus \lbrace 0 \rbrace$, $d(b_n,a_n)>0$ and $\frac{d(a_n,b_n)}{d(b_n,a_n)}<\frac{\|f|_L}{n}\to0$. This shows that $c(X)=0$.

For the second part,  if  $c(\mathcal{F}_a(X))=1$, then clearly $c(X)=1$ since the index of symmetry belongs to the segment $[0,1]$. Now, suppose that $c(X)=1$, then by part $(iii)$ of Proposition \ref{index}, we have that $(X,d)$ is a metrci space. It follows that $\textnormal{SLip}_0(X_{\overline{d}})=\textnormal{Lip}_0(X_{d})$ is the classical Banach space of real-valued Lipschitz functions on $(X,d)$ that vanish at $x_0$ and so $\mathcal{F}_a(X)$ coincides with the classical Lipschitz-free space which is a Banach space. Thus $c(\mathcal{F}_a(X))=1$ by part $(iii)$ of Proposition \ref{index}.
\end{proof}
We dont know if $c(\mathcal{F}_a(X))= c(X)$, but it probably seems true.
\begin{theorem} \label{A-free} Let $(X,d)$ be  a quasi-hemi-metric space and $\mathcal{F}_a(X)$ be the semi-Lipschitz free space over $(X, d)$.  Then, $c(X)=0$ if and only if $(\mathcal{F}_a(X),\|\cdot|^*)$ is not a Baire space.
\end{theorem}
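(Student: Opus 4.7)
The plan is to reduce this statement directly to the two earlier results: Theorem \ref{Baire} (applied to the asymmetric normed space $\mathcal{F}_a(X)$ itself) and Proposition \ref{Comp-index}$(i)$ (which transfers the vanishing of the index of symmetry back and forth between $X$ and $\mathcal{F}_a(X)$). The key observation that makes this work cleanly is that $\mathcal{F}_a(X)$ is by construction the bicompletion of the asymmetric normed space $\bigl(\operatorname{span}\{\delta_x : x \in X\},\|\cdot|^*\bigr)$, so it is automatically biBanach. This places it exactly in the scope of Theorem \ref{Baire}.

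For the forward direction, I would assume $c(X)=0$, apply Proposition \ref{Comp-index}$(i)$ to obtain $c(\mathcal{F}_a(X))=0$, and then invoke Theorem \ref{Baire} on the biBanach asymmetric normed space $\mathcal{F}_a(X)$ to conclude that $\mathcal{F}_a(X)$ is not a Baire space. For the reverse direction, I would contrapose: if $c(X)>0$, then Proposition \ref{Comp-index}$(i)$ again gives $c(\mathcal{F}_a(X))>0$ (this is the nontrivial implication; it rests on Theorem \ref{Ba-Fl}, whose use was already carried out in the proof of Proposition \ref{Comp-index}), and then Theorem \ref{Baire} applied to the biBanach space $\mathcal{F}_a(X)$ guarantees that $\mathcal{F}_a(X)$ is a Baire space.

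There is essentially no obstacle: the hard analytic content has already been packaged into Theorem \ref{Baire} (the Baire characterization for biBanach asymmetric normed spaces) and into Proposition \ref{Comp-index}$(i)$ (the transfer of $c(\cdot)=0$ between $X$ and $\mathcal{F}_a(X)$). The only point that warrants a short verification in the write-up is that $\mathcal{F}_a(X)$ genuinely falls under the hypothesis ``biBanach'' of Theorem \ref{Baire}, which is immediate from its definition as a bicompletion. No new construction, no linear functional, and no Baire category argument from scratch is needed; the theorem is a two-line corollary of the two earlier results applied in sequence.
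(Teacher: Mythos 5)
Your proposal is correct and matches the paper's proof exactly: the paper likewise deduces the theorem from Proposition \ref{Comp-index} and Theorem \ref{Baire}, noting that $\mathcal{F}_a(X)$ is biBanach by construction as a bicompletion. Nothing further is needed.
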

\begin{proof} The proof follows from Proposition \ref{Comp-index} and Theorem \ref{Baire}, noting that  $(\mathcal{F}_a(X),\|\cdot|^*)$ is a biBanach space by construction.
\end{proof}
From the above result, we can reformulate the remark just before the Open question 1, as follows.

\begin{proposition} Suppose that $(X,d)$ is a bicomplete quasi-hemi-metric space such that  $(\mathcal{F}_a(X),\|\cdot|^*)$  is a Baire space. Then $(X,d)$ is also a Baire space.
\end{proposition}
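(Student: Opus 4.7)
The plan is to combine Theorem \ref{A-free} with the elementary observation that was made in the remark preceding Open question 1. Specifically, I would start by taking the contrapositive of Theorem \ref{A-free}: since $(\mathcal{F}_a(X),\|\cdot|^*)$ is assumed to be a Baire space, Theorem \ref{A-free} forces $c(X)\neq 0$, so $c(X)>0$.

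Next, I would exploit the quasi-hemi-metric version of the two-sided inequality mentioned just before Open question 1, namely
$$c(X)\,d_s(x,y)\leq d(x,y)\leq d_s(x,y), \qquad \forall x,y\in X.$$
Since $c(X)>0$, this shows that $d$ and $d_s$ induce the same topology on $X$. Because $(X,d)$ is bicomplete by hypothesis, $(X,d_s)$ is by definition a complete metric space, hence a Baire space by the classical Baire category theorem.

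Finally, since being a Baire space is a topological property and the topology of $(X,d)$ coincides with the topology of the complete metric space $(X,d_s)$, it follows that $(X,d)$ is also a Baire space. There is no real obstacle here: the entire argument is a direct assembly of Theorem \ref{A-free} (which carries all the analytical content through the identity $\mathcal{F}_a(X)^{\flat}=\textnormal{SLip}_0(X_{\overline{d}})$) and the comparison of $d$ with its symmetrization.
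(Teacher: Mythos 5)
Your proposal is correct and is exactly the argument the paper intends: the paper states that this proposition is a reformulation, via Theorem \ref{A-free}, of the remark preceding Open question 1, which is precisely your combination of $c(X)>0$ with the inequality $c(X)d_s\leq d\leq d_s$ and the completeness of $(X,d_s)$. Nothing is missing.
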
 
We dont know if the converse in the above proposition is true in general. 
\vskip5mm
\noindent{\bf Open question 2.} Let $(X,d)$ be a quasi-hemi-metric space. Suppose that $(X,d)$ is a Baire space, does $(\mathcal{F}_a(X),\|\cdot|^*)$  is also a Baire space?

\vskip5mm
It  is clear that Open question 1 and Open question 2, are in fact equivalent.
\vskip5mm
\section*{Declaration} The author declare that there is no conflict of interest.
\section*{Acknowledgment}
This research has been conducted within the FP2M federation (CNRS FR 2036) and  SAMM Laboratory of the University Paris Panthéon-Sorbonne. 

\bibliographystyle{amsplain}

\end{document}